\documentclass{amsart}
\usepackage{xcolor}
\usepackage{mathtools}
\usepackage{amsmath}
\usepackage{cite}
\usepackage{amsthm}
\usepackage{graphicx}
\usepackage{amssymb}
\usepackage{epstopdf}
\usepackage{nicefrac}
\usepackage{mathrsfs}
\usepackage{hyperref}
\usepackage{enumitem}

\usepackage[margin=3cm]{geometry}

\usepackage{tikz}  
\usetikzlibrary{calc,arrows.meta,decorations.pathreplacing}

\theoremstyle{plain}
\newtheorem{theo}{Theorem}[section]
\newtheorem{lm}[theo]{Lemma}

\newtheorem{cor}[theo]{Corollary}

\theoremstyle{plain}

\DeclareMathOperator{\PW}{PW} 
\DeclareMathOperator{\supp}{supp}

\DeclareMathOperator{\BMO}{BMO} 
 
\DeclareMathOperator{\produ}{prod} 
\newcommand{\R}{\mathbb R}
\newcommand{\F}{\mathcal F}
\newcommand{\Finve}{\mathcal F^{-1}}
\newcommand{\wtOmega}{\widetilde\Omega}

\title{Recovering Product BMO from Schatten Hankel operators}

\author[K. Bampouras]{Konstantinos Bampouras}
\address{Sabanci University Tuzla Campus, Orta Mahalle, Üniversite Cadesi No:27 Tuzla, 34956 Istanbul, Turkey}
\email{kostasbaburas@gmail.com}

\author[K.-M. Perfekt]{Karl-Mikael Perfekt}
\address{Department of Mathematical Sciences, Norwegian University of Science and Technology (NTNU), 7491 Trondheim, Norway}
\email{karl-mikael.perfekt@ntnu.no}

\subjclass[2020]{42B30 (Primary) 47B35; 47B10; 42B35 (Secondary)}

\date{\today}
\begin{document}
	\begin{abstract}
		We prove that if a small Hankel operator on the product Hardy space belongs to some Schatten class $S^p$, $p < \infty$, then it has a symbol in product $\BMO$. In other words, the conclusion of Nehari's theorem holds under the hypothesis that the operator belongs to a Schatten class.
	\end{abstract}
	
	\maketitle
	
	\section{Introduction}
	
	The classical theory of singular integral operators is notoriously difficult to extend to multi-parameter operators \cite{Fef86, Pipher86, Rud69}. Many basic results require significantly more involved proofs \cite{AMPS23, Journe85, MPV22}, and other pillars of the traditional theory remain as open problems already in the bi-parameter setting. One of the main outstanding difficulties can be summarized in terms of the class of small Hankel operators $H_\varphi \colon H^2(\mathbb{C}_+^2) \to H^2(\mathbb{C}_+^2)$ on the product Hardy space: if $H_\varphi$ is bounded, must it be that the symbol $\varphi$ belongs to Chang--Fefferman product $\BMO_{\produ}$? In other words, is there a lower bound
	\begin{equation} \label{eq:bmolowerbound}
		\|\varphi\|_{\BMO_{\produ}} \lesssim \|H_\varphi\|?
	\end{equation}
	
	Some definitions are in order. We will take a boundary-value view of the analytic Hardy spaces,
	$$H^p(\mathbb{C}_+^2) = \{f \in L^p(\R^2) \, : \, \supp \F f \subset \R_+^2\}, \qquad 1 \leq p < \infty,$$
	where $\mathcal{F}$ is the distributional Fourier transform and $\R_+ = [0,\infty)$, but still use the notation $H^p(\mathbb{C}_+^2)$ in order to emphasize that we are dealing with the product Hardy space. We express that $\supp \F f \subset \R_+^2$ by saying that $f$ is analytic, and let $P_{++} \colon L^2(\R^2) \to H^2(\mathbb{C}_+^2)$ be the analytic projection 
	$$P_{++} = \Finve \chi_{\R_+^2} \F.$$ 
	Given a symbol $\varphi$, a Hankel operator $H_\varphi \colon H^2(\mathbb{C}_+^2) \to H^2(\mathbb{C}_+^2)$ is of the form
	$$\F (H_\varphi f)(\xi) = \int_{\mathbb{R}_+^2} \hat{\varphi}(\xi+ \lambda) \hat{f}(\lambda) \, d\lambda, \qquad \xi \in \mathbb{R}_+^2, $$
	where $\hat{f} = \F f$. In other words, $H_\varphi = P_{++} ( \varphi Jf)$, where $J$ is the linear involution $Jf(x) = f(-x)$. Functions $g \in \BMO_{\produ}$ are those that have a representation $g = b_0 + H_1b_1 + H_2b_2 + H_1H_2 b_3$, where $b_j \in L^\infty$, $0 \leq j \leq 3$ and $H_1, H_2$ are the partial Hilbert transforms in $x_1$ and $x_2$. However, for our purpose, the important fact will be that the dual space 
	\begin{equation} \label{eq:H1duality}
		(H^1(\mathbb{C}_+^2))^\ast = P_{++} \BMO_{\produ}.
	\end{equation}
	Note that $P_{++} \colon BMO_{\produ} \to BMO_{\produ}$ is bounded, by the rudiments of product Calderón--Zygmund theory \cite{FefStein82, Journe85}, and so it will not be a restriction to assume that the symbol $\varphi$ is analytic to begin with. By the duality \eqref{eq:H1duality}, it is then clear (and well known) that \eqref{eq:bmolowerbound} is equivalent to the analogue of Nehari's theorem in this setting: if $H_\varphi$ is bounded, there is $b \in L^\infty(\mathbb{R}^2)$ such that $P_{++} b = \varphi$. 
    
    Despite being one of the most central problems of the multi-parameter theory, verifying the validity of \eqref{eq:bmolowerbound}, and thus of Nehari's theorem, still stands today as one of the major challenges of the subject.  We refer to Holmes, Treil, and Volberg \cite{HTV21} for an overview of the state-of-the-art. In \cite{HTV21} they also obtained a positive result in a restricted setting, by imposing a sparsity assumption on the Haar--Fourier coefficients of the symbol.
	
	In this paper, we explore the problem in a different direction. We will prove that the conclusion of Nehari's theorem holds if one strengthens the hypothesis on the operator. To appreciate this, note, by a routine approximation argument, that it is sufficient to prove Nehari's theorem for the class of compact Hankel operators $H_\varphi \in S^\infty$. Hence the full Nehari theorem appears formally as the limiting case $p = \infty$ of the following theorem.
	\begin{theo}
		Suppose that $\varphi$ is an analytic symbol such that
		$$H_\varphi \in \bigcup_{p \geq 1} S^p(H^2(\mathbb{C}_+^2)).$$
		Then $\varphi \in \BMO_{\produ}$. In other words, there exists $\psi \in L^\infty(\mathbb{R}^2)$ such that $P_{++} \psi = \varphi$. 
		
		Furthermore, the lower bound 
		$$\|\varphi\|_{\BMO_{\produ}} \lesssim_{p} \|H_\varphi\|_{S^p}$$
		holds, with a constant possibly depending on $p$.
	\end{theo}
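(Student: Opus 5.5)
The plan is to pass through a product Besov-type condition and then verify Chang--Fefferman's Carleson condition for the Haar (or smooth wavelet) coefficients of $\varphi$. Fix $p\ge 1$, and recall that $S^p\subset S^q$ for $q\ge p$. So, at the cost of constants, the lower bound only gets weaker as $p$ grows, and we may treat general $p\ge1$ uniformly, with the case $p>2$ as the real one.

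\emph{Step 1: a Peller-type necessity estimate in the product setting.} Let $\Delta_{j,k}$ denote Littlewood--Paley projections onto the dyadic frequency rectangles $[2^j,2^{j+1})\times[2^k,2^{k+1})$. I will show
\begin{equation*}
\sum_{j,k\in\mathbb Z} 2^{j+k}\,\|\Delta_{j,k}\varphi\|_{L^p}^p \lesssim_p \|H_\varphi\|_{S^p}^p .
\end{equation*}
For this I would imitate Peller's (or Rochberg's) proof of necessity in one variable. The family of smooth wave packets adapted to dyadic rectangles in the time--frequency plane is almost orthonormal in $H^2(\mathbb C_+^2)$. Testing $H_\varphi$ against pairs of such packets gives entries $\langle H_\varphi u_R, v_{R'}\rangle$ that essentially sample $\Delta_{j,k}\varphi$ at the centre of $R$, normalised by $|R|^{1/2}$. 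The inequality $\sum|\langle T e_n,f_n\rangle|^p\le\|T\|_{S^p}^p$, valid for orthonormal $(e_n),(f_n)$ and $p\ge1$, then gives the bound; almost orthonormality is handled by splitting into finitely many sparse subfamilies. Since the packets are tensor products, this is genuinely a product of one-variable arguments. In wavelet terms, Step 1 yields
\begin{equation*}
\sum_R a_R^p\lesssim \|H_\varphi\|_{S^p}^p,\qquad a_R:=|R|^{-1/2}|\langle\varphi,h_R\rangle|,
\end{equation*}
where $R$ runs over dyadic rectangles. A Schur-test or tensor argument may additionally give mixed-norm information: $\ell^p$ in one parameter and something stronger in the other, obtained by viewing $H_\varphi$ as a one-variable Hankel operator with operator-valued symbol.

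\emph{Step 2: Carleson condition.} By Chang--Fefferman, $\|\varphi\|_{\BMO_{\produ}}^2\simeq\sup_\Omega |\Omega|^{-1}\sum_{R\subset\Omega}|\langle\varphi,h_R\rangle|^2$, where $\Omega$ runs over open sets of finite measure. Write $\sum_{R\subset\Omega}|\langle\varphi,h_R\rangle|^2=\sum_{R\subset\Omega}a_R^2|R|$. For $p\le2$ this is immediate, since $|R|\le|\Omega|$ and $\ell^p\subset\ell^2$. For $p>2$, Hölder gives the bound $\|a\|_{\ell^p}^2\bigl(\sum_{R\subset\Omega}|R|^{r}\bigr)^{1/r}$ with $r=p/(p-2)$. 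Rectangles of a fixed shape inside $\Omega$ number at most $|\Omega|/|R|$, so this reduces to bounding $\sum_{\text{shapes}}|R|^{r-1}$ over shapes fitting in $\Omega$.

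\emph{Main obstacle.} This last sum diverges: an open set of finite measure can contain dyadic rectangles of fixed area and arbitrarily large eccentricity. This is exactly the product-theory pathology, and it is where the simple $\ell^p$ information fails. To overcome it, I would first try to exploit the mixed-norm/tensor structure from Step 1. Fix the eccentricity $2^m$ and consider the rectangles of that eccentricity inside $\Omega$. Journé's covering lemma, or Pipher's refinement, controls how much of $\Omega$ is consumed by rectangles that are maximal in one direction; the embeddedness factor arising there should be paired with the decay available across eccentricities. One source of such decay is an improved Step 1 estimate in which the Hölder split is done separately in each parameter. Failing that, I would argue by duality with atoms of $H^1(\mathbb C_+^2)$, pairing $\varphi$ against a Chang--Fefferman atom $a$ supported on $\Omega$ and estimating $\langle\varphi,a\rangle$ via the $S^p$--$S^{p'}$ pairing $\operatorname{tr}(H_\varphi T_a)$ for a suitable operator $T_a$ built from $a$. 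I expect this step to carry essentially all the difficulty.
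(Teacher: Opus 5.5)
\paragraph{Gap in Step 2.} Your case $p\le 2$ is fine. For $p>2$, though, the proposal stops at the decisive point, and the reason you give for stopping is wrong. After Hölder you need the packing bound
\[
\sum_{R\in\mathcal D_2,\ R\subset\Omega}|R|^{r}\le C_r|\Omega|^{r},\qquad r=\frac{p}{p-2}>1 .
\]
You do not prove it. You assert that the sum diverges and then list alternatives: mixed-norm refinements, pairing Journé with ``decay across eccentricities'', and $\operatorname{tr}(H_\varphi T_a)$. None of these is an argument, so the whole case $p>2$ is unproved.

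The diagnosis is also incorrect. What can diverge is only your majorant, obtained by bounding the number of rectangles of each shape by $|\Omega|/|R|$ and then summing over all shapes that fit. That count ignores the fact that rectangles of many different shapes cannot all nearly fill $\Omega$ at once. The true sum is bounded for every $r>1$. In fact, in smooth-wavelet coordinates the case $p>2$ of the theorem is essentially equivalent to this bound, by $\ell^{p/2}$--$\ell^{r}$ duality. So nothing beyond $\sum_R a_R^p<\infty$ is needed: no mixed norms and no atoms.

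\paragraph{How to close it.} The missing estimate is elementary. Write the sum as $\sum_J|J|^r\sum_{I:\,I\times J\subset\Omega}|I|^r$.
\begin{enumerate}
\item The admissible $I$ lie in disjoint maximal dyadic intervals $L$. All of these lie in every slice $\Omega^y=\{x_1:(x_1,y)\in\Omega\}$ with $y\in J$.
\item Summing the geometric series over dyadic subintervals gives $\sum_I|I|^r\le C_r\sum_L|L|^r\le C_r(\inf_J g)^r$, where $g(y)=|\Omega^y|$ and $C_r=(1-2^{1-r})^{-1}$.
\item A layer-cake argument as in Lemma~\ref{lem:dyadcount}, with $E_t=\{g>t\}$ and $t|E_t|\le\|g\|_{L^1}$, gives
\[
\sum_{J}|J|^r\bigl(\inf_J g\bigr)^r\le\int_0^\infty rt^{r-1}\sum_{J\subset E_t}|J|^r\,dt\le C_r\int_0^\infty r\,\bigl(t|E_t|\bigr)^{r-1}|E_t|\,dt\le rC_r\|g\|_{L^1}^r=rC_r|\Omega|^r .
\]
\end{enumerate}
Alternatively, use Journé's lemma. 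Since $\hat I\times J\subset\wtOmega$, we have $\gamma_1(R)\lesssim|\Omega|/|R|$, so taking $\delta=r-1$ gives $\sum_{R\in M_2(\Omega)}|R|^r\lesssim|\Omega|^r$. Each $R=I\times J\subset\Omega$ lies below a unique $I\times\hat J\in M_2(\Omega)$, and $\sum_{J\subset\hat J}|I|^r|J|^r=C_r|I\times\hat J|^r$.

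\paragraph{Comparison with the paper.} With this inserted, your argument becomes a complete proof that is genuinely different from the paper's and much shorter. The paper never passes to wavelet coefficients. Instead it:
\begin{itemize}
\item dualizes against Han--Lu--Zhao $L^q$-normalized atoms;
\item controls $\langle f,a_R\rangle$ through $S_p(f)$ and annular decay;
\item absorbs the embeddedness weights via \eqref{eq:maingeometric}.
\end{itemize}
Your route replaces all of that with smooth tensor-wavelet characterizations of two spaces: the mixed-smoothness Besov norm $\sum_{i,j}2^{i+j}\|\Delta_{i,j}\varphi\|_p^p\approx\sum_R a_R^p$, and $\BMO_{\produ}$ via the Chang--Fefferman Carleson condition.

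Two technical points. First, use smooth wavelets throughout, not Haar functions: Haar coefficients characterize only dyadic product $\BMO$. Second, Step 1 follows directly from Theorem~\ref{thm:besov} together with that wavelet characterization, so your wave-packet sketch, whose ``sampling'' step is imprecise as written, is unnecessary.
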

	
	Let us make a few remarks on this theorem. It is straightforward to verify that we could have alternatively stated the theorem for iterated commutators $[H_1, [H_2, \varphi]] \in S^p(L^2(\R^2))$. As far as we know, the best available previous result in this direction was that $H_\varphi \in S^2(H^2(\mathbb{C}_+^2))$ implies the existence of a bounded symbol; after a translation, this statement with optimal constant is precisely the content of Helson's inequality \cite[Eq. (2.10)]{MR2263964}. The proof of our theorem is by necessity entirely factorization-free, which may be of some independent interest even in one dimension: it is well known that Nehari's theorem is equivalent to the weak factorization $H^1(\mathbb{C}_+^2) = H^2(\mathbb{C}_+^2) \odot H^2(\mathbb{C}_+^2)$.  We also expect our theorem to be valid for Hankel operators $H_\varphi \colon H^2(\mathbb{C}_+^n) \to H^2(\mathbb{C}_+^n)$ when $n \geq 3$. However, this requires further proof, as the geometry of open sets $\Omega \subset \R^n$ is substantially more involved, cf. \cite{Pipher86}. We have therefore chosen to focus on $n=2$ in this paper.
	
	Let us describe some of the ingredients in the proof of our theorem. First of all, it uses a Besov space description of the operators $H_\varphi \in S^p$. The relevant Besov spaces $B_{p,p}^{1/p}(\mathbb{C}_+^2)$ do not sit in the usual family of Besov spaces for $\R^2$, but are instead obtained by tensorization, reflecting the product structure of our spaces. We will also rely on atomic decompositions of the product Hardy space. Such a decomposition was first given by Chang and Fefferman, with $L^2$-atoms adapted to a general open set $\Omega$, but we require $L^q$-normalized atoms, for every $1 < q < 2$. In this latter case, the normalization condition necessarily involves measurements of the embeddedness levels of the dyadic rectangles contained in $\Omega$. The familiar reader recognizes that this reflects the famous geometric packing estimate known as Journ\'e's lemma. We refer to Subsections~\ref{subsec:journe} and \ref{subsec:atomic} for further discussion and references. A final ingredient to mention is a local summability estimate over dyadic rectangles contained in $\Omega$, penalized by their embeddedness constants, see \eqref{eq:maingeometric}.
	
	As a final point of the introduction, let us mention that it is also natural to study (truncated) Hankel operators $H_\varphi \colon \PW(\Omega) \to \PW(\Omega)$ on the Paley--Wiener space associated with a bounded convex domain $\Omega \subset \R^n$. In general, the situation is then much less clear. For instance, in \cite{bampouras2026nehari}, it has been shown that for every $p>4$ there exists a Schatten class Hankel operator $H_\varphi\in S^p(\PW(\mathbb{D}))$ that does not admit a bounded symbol.
	However, by following the proof of \cite[Proposition 5.2]{MR4227573} more or less verbatim, we obtain the following corollary of our theorem.
	\begin{cor}
		Let $P\subset \mathbb{R}^2$ be a polygon and let $H_\varphi$ be a Hankel operator on $\PW(P)$ with $$H_\varphi \in \bigcup_{p\geq 1}S^p(\PW(P)).$$ Then there exists a bounded function $\psi \in L^\infty(\mathbb{R}^2)$ such that $H_\psi =H_\varphi.$
	\end{cor}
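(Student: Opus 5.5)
The plan is to localize the symbol on the Fourier side to neighbourhoods of the points of $2P = P + P$. Near each point, the polygon looks, after an affine change of variables, like a quadrant, a half-plane or the whole plane, so the problem reduces to the Theorem or to simpler one-parameter and zero-parameter statements. This follows the scheme of \cite[Proposition 5.2]{MR4227573}. Recall that on $\PW(P)$ we have $\F(H_\varphi f)(\xi) = \int_P \hat\varphi(\xi+\eta)\hat f(\eta)\,d\eta$ for $\xi \in P$, so only $\hat\varphi|_{2P}$ matters. The goal is a bounded $\psi$ with $\hat\psi = \hat\varphi$ on $2P$.

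\textbf{Step 1 (partition).} Cover $2P$ by finitely many small open sets $U_j$ of three types:
\begin{itemize}
\item neighbourhoods of the vertices $2v$, for $v$ a vertex of $P$;
\item neighbourhoods of points on the open edges;
\item sets compactly inside the interior.
\end{itemize}
Each $U_j$ should be small enough that the following holds. If $\xi, \eta \in P$ and $\xi + \eta \in U_j$, then $\xi$ and $\eta$ lie in a region $V_j \subset P$ where $P$ agrees with a translated affine image of a model cone: the quadrant $\R_+^2$, a half-plane, or $\R^2$. For vertex sets this holds because $\xi + \eta$ close to $2v$ forces both $\xi$ and $\eta$ close to $v$, by convexity; for non-convex vertices one splits the reflex angle into convex sectors first. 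Choose a smooth partition of unity $\sum_j \chi_j = 1$ on $2P$ subordinate to $\{U_j\}$.

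\textbf{Step 2 (the localized operators stay in $S^p$).} Write $\chi_j(\xi+\eta) = \int \check\chi_j(t)\, e^{it\cdot\xi}e^{it\cdot\eta}\,dt$. This gives
$$H_{\chi_j\hat\varphi\text{-symbol}} = \int \check\chi_j(t)\, M_t H_\varphi M_t\,dt,$$
where $M_t$ are unitary modulations and $\check\chi_j \in L^1$. Hence the localized Hankel operator lies in $S^p$ with norm $\lesssim \|H_\varphi\|_{S^p}$. Since $\chi_j\hat\varphi$ is supported in $U_j$, this operator only sees $\hat f|_{V_j}$ and outputs on $V_j$. After the affine change of variables and a translation, which are unitary up to constants on the Fourier side, it becomes the compression of a Hankel operator on the model cone. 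By the support property in Step 1, this compression \emph{equals} the full model-cone Hankel operator $H_{\varphi_j}$. In the vertex case this means $H_{\varphi_j} \in S^p(H^2(\mathbb C_+^2))$.

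\textbf{Step 3 (apply the model results).} For vertex pieces, the Theorem gives $\psi_j \in L^\infty$ with $P_{++}\psi_j = \varphi_j$, that is, $\hat\psi_j = \hat\varphi_j$ on the quadrant. For edge pieces, the model is $H^2(\mathbb C_+)\otimes L^2(\R)$, and the Hankel operator is a direct integral of one-variable Hankel operators. The classical Nehari theorem, applied uniformly in the parameter, yields a bounded symbol; alternatively, one can embed the half-plane into a quadrant and use the Theorem again. For interior pieces the model Hankel operator acts on all of $L^2(\R^2)$ and equals $J$ composed with multiplication by $\varphi_j$, so boundedness alone gives $\varphi_j \in L^\infty$. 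Undoing the affine maps and modulations preserves boundedness. Setting $\psi = \sum_j \psi_j$, we get $\hat\psi = \sum_j \chi_j\hat\varphi = \hat\varphi$ on $2P$, hence $H_\psi = H_\varphi$.

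The main obstacle I expect is Step 1 together with the identification in Step 2. One must choose the neighbourhoods so that the truncated operator on $\PW(P)$ \emph{coincides} with the full model-cone Hankel operator, and is not merely a compression of it; otherwise the Theorem cannot be invoked. This requires care at reflex vertices, where the local geometry of $P$ is not a single convex cone, and also when sums $\xi + \eta$ from distant parts of $P$ land near a vertex of $2P$. The Schatten-class bookkeeping itself is routine.
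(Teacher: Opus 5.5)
Your vertex argument is sound and matches the localization scheme of \cite[Proposition 5.2]{MR4227573} that the paper invokes. That scheme uses a partition of unity on $2P$, the average $\int\check\chi_j(t)M_tH_\varphi M_t\,dt$ to stay in $S^p$, and an affine reduction at a convex vertex to the quadrant; there the tangent cone is pointed, so the truncated operator really equals the quadrant Hankel operator. The gap is in the edge and interior pieces. For these, the support property of Step 1 fails for \emph{every} choice of $U_j$, so the identification in Step 2 is false. If $w$ lies on an open edge with direction $e$, the pairs $(w+se,w-se)$, $s\in\R$, all have sum $2w$. Hence the half-plane Hankel operator with symbol supported near $2w$ lives on an unbounded set of pairs, and the operator on $\PW(P)$ is merely its compression. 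Already inside $P$, if $w$ is the midpoint of an edge $[a,b]$, then $a+b=2w$, so the relevant pairs reach both corners $a$ and $b$, and no single model cone agrees with $P$ near both. The same happens at interior points. Boundedness of a compression gives no a priori control of the full operator. So your Step 3 arguments for edges (fibrewise Nehari for the full half-plane operator) and for interior pieces (boundedness of $J$ composed with multiplication on all of $L^2(\R^2)$) have no input. The fallback of embedding the half-plane into a quadrant and invoking the Theorem cannot work either. The kernel $g(\xi+\eta)$ is invariant under $(\xi,\eta)\mapsto(\xi+se,\eta-se)$, so a nonzero half-plane Hankel operator is never compact. (Your reflex-vertex remark is also unfounded. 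At a reflex vertex, $2v$ is interior to $P+P$ and far-apart pairs contribute; the corollary, like its source, concerns convex polygons.)

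The missing idea is to replace equality by a lower bound obtained by testing, using that the kernel depends only on $\xi+\eta$. Take an interior piece $g=\chi_j\hat\varphi$ supported near $2y$, with $y$ an interior point of $P$. Pair the localized operator with $u(\eta)=e^{ix\cdot\eta}\rho(\eta-y)$ and $v(\xi)=e^{ix\cdot\xi}\rho(\xi-y)$, where $\supp\rho$ is so small that $u,v$ live in $P$. This gives $|\Finve(gW)(x)|\lesssim\|H_\varphi\|\,\|\rho\|_2^2$ uniformly in $x$, where $W=(\rho*\rho)(\cdot-2y)$ is positive on $\supp g$. Dividing by $W$ amounts to convolving with an $L^1$ kernel, so $\Finve g\in L^\infty$. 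For an edge piece, localize this way only in the tangential variable, and use arbitrary functions supported in $[0,r)$ in the normal variable. The half-line is a pointed cone, so your exact identification does work in the normal variable. You then get a uniform bound on the one-variable Hankel operators of the tangential fibres of $gW$, which is the same as boundedness of the full half-plane operator with symbol $gW$. One-parameter Nehari (boundedness suffices) and division by $W$ then give the bounded symbol. Only the vertex pieces need the Schatten hypothesis and the Theorem.
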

	
	\textbf{Acknowledgments.} The authors used ChatGPT 5.4 by OpenAI as an auxiliary tool for literature review, exploration of potential approaches to the problem, proofreading, and identifying minor mathematical mistakes in earlier drafts. The authors independently wrote and verified all parts of the paper. The second author was supported by grant no. 334466 of the Research Council of Norway, “Fourier
	Methods and Multiplicative Analysis”.
	\section{Preliminaries}
	\subsection{The Besov space $B_{p,p}^{1/p}(\mathbb{C}_+^2)$}
	To define the Besov space $B_{p,p}^{1/p}(\mathbb{C}_+^2)$, fix a function $\psi \in C_c^\infty(\R_+)$ such that
	\[
	\supp \psi \subset [1/2,2], \qquad \sum_{j\in\mathbb Z}\psi(2^{-j}\xi)=1, \; \xi > 0.
	\]
	Furthermore, let $\widetilde\psi \in C_c^\infty(\R_+)$ be another function with $\supp \widetilde \psi \subset [1/4, 4]$ such that $\widetilde\psi(\xi)=1$ for all $\xi\in \supp \psi$.
	We let
	\[
	\psi_j(\xi):=\psi(2^{-j}\xi),\qquad
	\widetilde\psi_j(\xi):=\widetilde\psi(2^{-j}\xi),
	\]
	and define accordingly the one-parameter Fourier multipliers
	\[
	\F(\Delta_j^{(1)} f)(\xi_1,\xi_2)=\psi_j(\xi_1)\widehat f(\xi_1,\xi_2),
	\qquad
	\F(\widetilde\Delta_j^{(1)} f)(\xi_1,\xi_2)=\widetilde\psi_j(\xi_1)\widehat f(\xi_1,\xi_2),
	\]
	and, similarly in the second variable,
	\[
	\F(\Delta_j^{(2)} f)(\xi_1,\xi_2)=\psi_j(\xi_2)\widehat f(\xi_1,\xi_2),
	\qquad
	\F(\widetilde\Delta_j^{(2)} f)(\xi_1,\xi_2)=\widetilde\psi_j(\xi_2)\widehat f(\xi_1,\xi_2).
	\]
	The Besov space $B_{p,p}^{1/p}(\mathbb{C}_+^2)$, $1 \leq p < \infty$, is defined with respect to the convolution operator
	\[
	\Delta_{i,j}:=\Delta_i^{(1)}\Delta_j^{(2)}.
	\]
	It consists of the functions\footnote{A priori, the elements in the Besov space should be considered as distributions, but the theorem shows that they are locally integrable functions.} $f$ with Fourier support in $\R_+^2$ satisfying 
	$$ \|f\|_{B_{p,p}^{1/p}} = \|S_p(f)\|_{L^p(\R^2)} < \infty,$$
	where
	\[
	S_p(f)
	=
	\Bigg(\sum_{i,j\in\mathbb Z}2^{i+j}\,|\Delta_{i,j}f |^p\Bigg)^{1/p}.
	\]
	Note also that for tempered distributions $f$ with Fourier support in $(0,\infty)^2$, we have the reproducing formula
	\begin{equation} \label{eq:Caldron}	
		f=\sum_{i,j\in\mathbb Z}\widetilde\Delta_{i,j}\Delta_{i,j}f,
	\end{equation}
	where $\widetilde\Delta_{i,j}:=\widetilde\Delta_i^{(1)}\widetilde\Delta_j^{(2)}$, since $\sum_{j} \widetilde\psi_j \psi_j=1$ on $\R_+$.
	We will use the following characterization of Schatten class Hankel operators.
	\begin{theo}[\cite{BP25, MR2097606}] \label{thm:besov}
		Let $\varphi$ be an analytic symbol, and $1 \leq p < \infty$. Then $H_\varphi \in S^p(H^2(\mathbb{C}_+^2))$ if and only if $\varphi \in B_{p,p}^{1/p}(\mathbb{C}_+^2)$, with equivalence of norms.
	\end{theo}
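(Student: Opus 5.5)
The plan is to follow Peller's one-parameter proof and adapt each step to the product setting. Throughout, we use the Fourier-side model: $H_\varphi$ is unitarily equivalent to the integral operator $\Gamma_\varphi$ on $L^2(\R_+^2)$ with kernel $\hat\varphi(\xi+\lambda)$. We treat three ranges of $p$ separately, $p=2$, $p=1$ and $1<p<2$, and $p>2$, and then combine them by duality.

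\emph{Step 1 ($p=2$).} Here everything is explicit. A change of variables gives
$$\|H_\varphi\|_{S^2}^2=\int_{\R_+^2}\int_{\R_+^2}|\hat\varphi(\xi+\lambda)|^2\,d\xi\,d\lambda=\int_{\R_+^2}|\hat\varphi(\eta)|^2\,\eta_1\eta_2\,d\eta .$$
By Plancherel and the almost-orthogonality of the $\Delta_{i,j}$, the right-hand side is comparable to $\sum_{i,j}2^{i+j}\|\Delta_{i,j}\varphi\|_2^2=\|\varphi\|_{B^{1/2}_{2,2}}^2$.

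\emph{Step 2 ($p=1$, upper bound).} Decompose $\varphi=\sum_{i,j}\widetilde\Delta_{i,j}\Delta_{i,j}\varphi$ by \eqref{eq:Caldron}; it suffices to show $\|H_{\widetilde\Delta_{i,j}g}\|_{S^1}\lesssim 2^{i+j}\|g\|_{L^1}$.
\begin{itemize}
\item The Fourier support of $\widetilde\Delta_{i,j}g$ lies in the rectangle $R_{ij}=[2^{i-2},2^{i+2}]\times[2^{j-2},2^{j+2}]$.
\item Write $\widetilde\Delta_{i,j}g=\int g(y)\,K_{ij}(\cdot-y)\,dy$, where $K_{ij}$ is the tensor kernel $2^{i+j}\check{\widetilde\psi}(2^i x_1)\check{\widetilde\psi}(2^j x_2)$.
\item Since $\|H_{K_{ij}(\cdot-y)}\|_{S^1}$ is translation invariant, the bound reduces to the single estimate $\|H_{K_{ij}}\|_{S^1}\lesssim 2^{i+j}$.
\item Because $K_{ij}$ is a tensor product, $H_{K_{ij}}$ is the tensor product of two one-variable Hankel operators. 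So $\|H_{K_{ij}}\|_{S^1}$ factors, and each factor is $\lesssim 2^i$ (resp.\ $2^j$) by dilation of a fixed smooth compactly Fourier-supported Hankel operator.
\end{itemize}
This gives $S^1$-boundedness of $\varphi\mapsto H_\varphi$ on $B^1_{1,1}$. Complex interpolation with Step 2's $p=1$ bound and Step 1 then gives the upper bound for $1\le p\le 2$. These tensorized Besov spaces are retracts of the weighted sequence spaces $\ell^p_{2^{(i+j)}}(L^p)$, so they interpolate as expected.

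\emph{Step 3 (lower bound, and $p>2$).} The key tool is the averaging projection $\mathcal P$ from integral operators on $L^2(\R_+^2)$ onto Hankel operators. It replaces a kernel $k(\xi,\lambda)$ by its average over the antidiagonal planes $\xi+\lambda=\eta$, with the weight normalized so that $\mathcal P\Gamma_\varphi=\Gamma_\varphi$. Two facts drive the argument.
\begin{itemize}
\item In the product setting, $\mathcal P=\mathcal P_1\otimes\mathcal P_2$ is the composition of the one-variable averaging projections acting in each variable.
\item Peller's theorem gives that each one-variable $\mathcal P_k$ is bounded on $S^q$ for $1<q<\infty$. His transference proof is via Schur multipliers of Toeplitz type, and such multipliers are completely bounded on $S^q$.
\end{itemize}
Viewing $S^q(L^2(\R_+)\otimes L^2(\R_+))$ as $S^q(L^2(\R_+);S^q(L^2(\R_+)))$ then yields boundedness of $\mathcal P$ on $S^q$.

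Granting this, the remaining bounds follow by duality.
\begin{itemize}
\item \emph{Upper bound for $p>2$.} Write $\|H_\varphi\|_{S^p}=\sup|\operatorname{tr}(H_\varphi T)|$ over $\|T\|_{S^{p'}}\le1$, and replace $T$ by $\mathcal P T$, which is again a Hankel operator. Its symbol lies in $B^{1/p'}_{p',p'}$ by the lower bound for $p'<2$. The pairing is then estimated through the duality $(B^{1/p'}_{p',p'})^*=B^{1/p}_{p,p}$, realized by $\langle\varphi,\bar\chi\rangle$ with the weight $\eta_1\eta_2$.
\item \emph{Lower bound for all $p$.} Test $H_\varphi$ against the Hankel operators $H_{\widetilde\Delta_{i,j}h}$ with $h$ ranging over $B^{1/p'}_{p',p'}$, and apply the upper bound at $p'$. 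For $p=1$, instead test against a bounded family $H_{\widetilde\Delta_{i,j}h}$ with $\|h\|_\infty\le1$; that family is uniformly bounded in operator norm by the Step 2 kernel computation.
\end{itemize}

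\emph{Main obstacle.} I expect the hardest step to be the $S^q$-boundedness of the product averaging projection $\mathcal P_1\otimes\mathcal P_2$, that is, verifying that Peller's one-variable projection is genuinely completely bounded on $S^q$ rather than merely bounded. My first attempt would be to exhibit $\mathcal P_k$ as a composition of Schur multipliers of the form $m(\xi-\lambda)$ and conditional expectations onto Toeplitz-type subalgebras via transference (Coifman--Rochberg style). Each of those pieces tensorizes automatically, since Schur multipliers with Toeplitz symbols are completely bounded with the same norm.
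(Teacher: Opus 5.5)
\paragraph{Review.} The paper does not prove this statement; it quotes it from \cite{BP25, MR2097606}. Your argument therefore has to stand on its own, and as written it does not close.

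Steps 1 and 2 and the interpolation are fine. They give $\|H_\varphi\|_{S^p}\lesssim\|\varphi\|_{B^{1/p}_{p,p}}$ for $1\le p\le 2$. Testing $H_\varphi$ against Hankel operators $H_\chi$ with this bound at $p'\le 2$ then correctly gives the lower bound for $2\le p<\infty$.

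The two remaining ranges are each derived from the other, so Step 3 is circular:
\begin{itemize}
\item Your upper bound at $p>2$ needs the lower bound at $p'<2$, to place the symbol of $\mathcal P T$ in $B^{1/p'}_{p',p'}$.
\item Your lower bound at $p'\in(1,2)$ comes from testing and the upper bound at $(p')'=p>2$.
\end{itemize}
Once $\mathcal P$ is bounded on $S^{p'}$, these two statements are equivalent by exactly this duality. So the projection alone cannot supply either one, and an independent proof of one of them is missing.

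The $S^q$-boundedness of $\mathcal P_1\otimes\mathcal P_2$ is also not proved. The one-variable boundedness of $\mathcal P_k$ is usually deduced from the Besov description at $q$ and $q'$ by this same duality, not from a Schur-multiplier transference. There is thus no ready-made complete boundedness to tensorize. Bounding the block averaging projection is essentially Peller's theorem for Hankel operators with $S^q$-valued symbols.

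The $p=1$ lower bound also fails as written. $\|\varphi\|_{B^1_{1,1}}$ is an $\ell^1$-sum over $(i,j)$, so you need a single test operator $\sum_{i,j}H_{\chi_{ij}}$ with $\|\chi_{ij}\|_\infty\lesssim 1$. Uniform boundedness of the pieces only yields $\sup_{i,j}2^{i+j}\|\Delta_{i,j}\varphi\|_{L^1}\lesssim\|H_\varphi\|_{S^1}$. The sum itself need not be bounded, because its symbol only lies in a product $B^0_{\infty,\infty}$-type space. For example, $N$ lacunary exponentials in $x_1$ times a fixed bump in $x_2$ give a Hankel operator of norm $\approx\sqrt N$.

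The standard remedy is localization, which works for every $1\le p<\infty$ and breaks the circle:
\begin{itemize}
\item Test against block-diagonal operators $B=\sum_{i,j}P_{Q_{ij}}H_{\chi_{ij}}P_{Q_{ij}}$. Here the $Q_{ij}$ are pairwise disjoint frequency boxes at scale $(2^i,2^j)$ and $P_{Q_{ij}}$ are the corresponding Fourier projections.
\item Then $\|B\|_{S^{p'}}=\big(\sum_{i,j}\|P_{Q_{ij}}H_{\chi_{ij}}P_{Q_{ij}}\|_{S^{p'}}^{p'}\big)^{1/p'}$, or the supremum when $p'=\infty$.
\item Each block satisfies $\|P_{Q_{ij}}H_{\chi}P_{Q_{ij}}\|_{S^{r}}\lesssim 2^{(i+j)/r}\|\chi\|_{L^{r}}$ for $2\le r\le\infty$, by interpolating between $S^2$ and $S^\infty$.
\item The pairing now carries the tent weight $|Q_{ij}\cap(\eta-Q_{ij})|$ instead of $\eta_1\eta_2$. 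This is handled with finitely many shifted box systems.
\end{itemize}

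You still need a genuine argument for the upper bound at $p>2$. One clean route avoids the projection altogether. View $H_\varphi$ as a one-variable Hankel operator on $H^2(\mathbb{C}_+;H^2(\mathbb{C}_+))$ whose symbol is the $S^p$-valued function sending $x_1$ to the Hankel operator in $x_2$ with symbol $\varphi(x_1,\cdot)$. Then apply Peller's vectorial theorem. The scalar one-variable theorem and Fubini identify $B^{1/p}_{p}(S^p)$ with $B^{1/p}_{p,p}(\mathbb{C}_+^2)$, which gives both bounds for all $p$ at once.
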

	
	\subsection{The Journ\'e lemma} \label{subsec:journe}
	We let $\mathcal{D}$ denote the family of dyadic intervals $J = [m2^n, (m+1)2^n)$, $m, n \in \mathbb{Z}$, and let $\mathcal{D}_2$ denote the family of dyadic rectangles $R = I \times J$, where $I, J \in \mathcal{D}$. For $c > 0$, $cJ$ and $cR$ denote the interval and rectangle concentric with $J$ and $R$, respectively, dilated by the factor $c$. Given an open set $\Omega \subset \R^2$ of finite measure, $M_1(\Omega)$ denotes the collection of dyadic rectangles $R = I \times J \subset \Omega$ for which the interval $I$ is maximal. That is, if $I \times J \subset \hat{I} \times J \subset \Omega$ for another interval $\hat{I} \in \mathcal{D}$, then $I = \hat{I}$. $M_2(\Omega)$ is defined analogously as the collection of dyadic rectangles $R \subset \Omega$ that are maximal in the $x_2$-direction. 
	
	Following Journ\'e, for an open set $\Omega \subset \R^2$ of finite measure, we consider its extension
	$$\wtOmega = \{x \in \R^2 \, : \,  \mathscr{M}_s \chi_\Omega(x) > 1/2\},$$ where $\mathscr{M}_s$ denotes the strong maximal function. By the maximal theorem,
	$$\Omega \subset \wtOmega, \qquad |\wtOmega| \leq C|\Omega|,$$
	where $C > 1$ is an absolute constant. Furthermore, for $R = I \times J \in M_2(\Omega)$, we denote by $\hat I=\hat I(R)$ the maximal dyadic interval such that $\hat I\times J\subset \wtOmega$, and define the embeddedness constant of $R$ by
	\[
	\gamma_1(R)=\frac{|\hat I|}{|I|}.
	\]
	We similarly define the embeddedness constant $\gamma_2(R)$ for $R \in M_1(\Omega)$. 
	
	The Journ\'e lemma \cite{Journe85, Pipher86} gives a weighted packing estimate for general open sets $\Omega$.
	\begin{lm}[Journ\'e]
		If $\Omega \subset \R^2$ is an open set of finite measure, then for every $\delta > 0$,
		$$\sum_{R \in M_2(\Omega)} \gamma_1(R)^{-\delta} |R| \lesssim |\Omega|,$$
		with an implied constant depending only on $\delta$.
	\end{lm}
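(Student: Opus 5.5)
The plan is to follow Journé's original argument, with one change: the packing estimate is organised by the dyadic value of the embeddedness constant. I will prove that for every integer $k\ge 0$,
\begin{equation*}
\sum_{\substack{R\in M_2(\Omega)\\ 2^k\le \gamma_1(R)<2^{k+1}}} |R| \lesssim (k+1)\,|\Omega|
\quad\text{(or even } \lesssim 2^{\epsilon k}|\Omega| \text{ for every } \epsilon>0\text{)}.
\end{equation*}
Multiplying by $2^{-\delta k}$ and summing over $k$ then gives the lemma, with a constant depending only on $\delta$.

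\emph{Step 1: fixing the horizontal side.} Fix a dyadic interval $J$. The rectangles $I\times J\in M_2(\Omega)$ have pairwise distinct horizontal sides $I$. By $x_2$-maximality they cannot be nested: if $I\subsetneq I'$, then $I\times J\subset I'\times J\subset\Omega$ would contradict maximality of $I\times J$ in the $x_2$-direction. Hence, for fixed $J$, the intervals $I$ are pairwise disjoint. Instead of fixing $J$, I will therefore regroup by $I$ and sum over the $J$'s. For fixed $I$, the intervals $J$ with $I\times J\in M_2(\Omega)$ are exactly the maximal dyadic intervals contained in the open set $E_I=\{x_2: I\times\{x_2\}\subset\Omega\}$. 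They are disjoint, and their total length is at most $|E_I|$.

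\emph{Step 2: using the embeddedness constant.} Suppose $\gamma_1(R)\approx 2^k$ for $R=I\times J$. Then the dyadic ancestor $I^{(k)}$ of $I$, of length $2^k|I|$, satisfies $I^{(k)}\times J\subset\wtOmega$. The key observation comes from this. Fix a dyadic length $2^{-n}$ for $I$ and a dyadic ancestor $K$ with $|K|=2^k 2^{-n}$. The rectangles $I\times J$ with $I\subset K$, $|I|=2^{-n}$ and $\gamma_1\approx 2^k$ all lie in $K\times J\subset\wtOmega$.

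For a fixed $x_2$-fibre and a fixed scale $n$, consider the $I$'s attached to a point $x_2\in J$. Since $I\times\{x_2\}\subset\Omega$ at scale $n$, the union over scales is controlled by the following counting fact. At a point $x=(x_1,x_2)\in\Omega$, the number of scales $n$ for which $x$ lies in some $R\in M_2(\Omega)$ with $|I|=2^{-n}$ and $\gamma_1(R)\approx 2^k$ is $O(k+1)$. The reason is that $R$ being maximal in $x_2$ with ancestor $I^{(k+1)}\times J\not\subset\wtOmega$ pins down the scale of $I$ to within $k+1$ dyadic levels of a single critical scale. This critical scale is the first scale at which the dyadic interval containing $x_1$ leaves $\wtOmega$ along the fibre $J$. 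The cleaner way to do this is Journé's approach: apply the one-dimensional maximal function in $x_1$ to $\chi_{\Omega}(\cdot,x_2)$. Since $I^{(k+1)}\times J\not\subset\wtOmega$, there is a point of $I^{(k+1)}\times J$ where $\mathscr{M}_s\chi_\Omega\le1/2$. It follows that $|\Omega\cap (I^{(k+1)}\times J)|\le \tfrac12|I^{(k+1)}\times J|$.

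\emph{Step 3: summation.} Using this density bound, the family $\{I^{(k+1)}\times J\}$ at a fixed $J$ consists of intervals whose $\Omega$-density is at most $1/2$, while each contains $I\times J\subset\Omega$. I will sum $|R|=2^{-k-1}|I^{(k+1)}\times J|$. A stopping-time / Carleson-type argument over these enlarged rectangles should then bound $\sum|I^{(k+1)}\times J|\lesssim (k+1)2^{k}|\wtOmega|$, which gives the claim with the factor $2^{-k}$ cancelling.

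The main obstacle is Step 3: controlling the overlap of the enlarged rectangles $I^{(k+1)}\times J$ as $J$ varies, since different $J$'s give different intervals $I$. I would first try Journé's original fibre-wise argument. The idea is to fix $x_2$, reduce to a one-dimensional family of intervals $I^{(k+1)}$ that are contained in $\{\mathscr M_1\chi_{\Omega}(\cdot,x_2)>2^{-k-2}\}$, and then integrate in $x_2$ using the weak-type bound for the maximal function. This loses a factor $2^{k}$, and it is compensated by the $2^{-k}$ gain above.
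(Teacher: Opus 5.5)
The paper does not prove this lemma; it quotes it from Journé and Pipher. Your argument therefore has to stand on its own, and it has a genuine gap. The estimate that carries everything is $\sum|I^{(k+1)}\times J|\lesssim(k+1)2^k|\wtOmega|$ in Step 3, and it is only asserted (``should then bound''). Neither of the ideas offered in its support works as stated.

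\emph{The counting fact in Step 2 is false.} Take the staircase $\Omega=\bigcup_{m=0}^{M}(-\epsilon,2^{-m})\times(-\epsilon,2^{m})$ with $\epsilon$ small. Every $R_n=[0,2^{-n})\times[0,2^n)$, $0\le n\le M$, lies in $M_2(\Omega)$, and all of them contain $[0,2^{-M})\times[0,1)$. The strong maximal function of the indicator of $\{y_1y_2<1\}$ decays like $\log(y_1y_2)/(y_1y_2)$, so in the relevant region $\wtOmega\subset\{y_1y_2<C\}$. Hence $\gamma_1(R_n)\le C$ for every $n$, and for some fixed $k$ the pointwise overlap is $\gtrsim M$. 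Your justification fails because the ``critical scale'' is defined relative to $J$, and $J$ changes with the scale of $I$.

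\emph{The fibre-wise plan in $x_2$ does not close the gap either.} The weak-type inequality bounds $|\{\mathscr M_1\chi_\Omega(\cdot,x_2)>2^{-k-2}\}|$, which controls the measure of the union of the $I^{(k+1)}$. You need their sum counted with multiplicity. The map $I\mapsto I^{(k+1)}$ can be up to $2^{k+1}$-to-one, and a single fibre can carry long nested chains (as in the staircase). You give no bound on this multiplicity. Since $|R|=2^{-k-1}|I^{(k+1)}\times J|$ is an identity, the $2^k$ loss and the $2^{-k}$ ``gain'' simply cancel, and what remains to be proved is exactly the lemma.

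\emph{Step 1 also contains an error.} For fixed $J$, the intervals $I$ with $I\times J\in M_2(\Omega)$ can be nested (take $\Omega$ a square). Your argument would contradict maximality in $x_1$, not in $x_2$.

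\emph{The missing idea is to slice in $x_1$}, the direction in which $\gamma_1$ measures extension. There your own density observation telescopes. Fix $x_1$, let $I_l\ni x_1$ be dyadic with $|I_l|=2^{-l}$, and set $E_l=\{x_2: I_l\times\{x_2\}\subset\Omega\}$, which increases with $l$.

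If $I_l\times J\in M_2(\Omega)$ has $\gamma_1=2^k$, then $J$ is one of the pairwise disjoint maximal dyadic subintervals of $E_l$. Moreover $I_{l-k-1}\times J\not\subset\wtOmega$ gives $|\Omega\cap(I_{l-k-1}\times J)|\le\frac12|I_{l-k-1}||J|$, hence $|J\cap E_{l-k-1}|\le\frac12|J|$. So the $J$'s attached to $I_l$ have total length at most $2|E_l\setminus E_{l-k-1}|$.

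Summing over $l$, each layer $E_s\setminus E_{s-1}$ is counted $k+1$ times. The total is therefore at most $2(k+1)|\{x_2:(x_1,x_2)\in\Omega\}|$. Integrating in $x_1$ gives $\sum_{\gamma_1(R)=2^k}|R|\le 2(k+1)|\Omega|$, which is the bound you were aiming for. Multiplying by $2^{-\delta k}$ and summing over $k$ finishes the proof.
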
 
	
	\subsection{$L^q$-normalized atomic decompositions} \label{subsec:atomic}
	A famous counterexample given by Carleson in 1974 showed that dual space $(H^1(\mathbb{C}_+^2))^\ast$ cannot be described in terms of the naive extension of the 1-dimensional $\BMO$ space. It immediately implies that it is not possible to give an atomic decomposition of $H^1(\mathbb{C}_+^2)$ based on (projections of) atoms which are adapted only to rectangles. Chang and Fefferman \cite{ChaFef82} then gave us the correct atomic decomposition, where the atoms must be adapted to general open sets $\Omega \subset \R^2$. This precisely explains why the product setting is so intricate and highlights the importance of the Journ\'e lemma.
	
	The atoms of Chang and Fefferman are $L^2$-normalized, but in order to prove our theorem, we will need $L^q$-normalized atoms, where $q < 2$ is the dual index of $p$. A decomposition with $L^q$-normalized atoms, $1 < q < 2$, was given much later by Han, Lu, and Zhao \cite{HanLuZhao10}, and requires an even more involved atomic concept. For $1 < q < 2$, we say that $a$ is an $L^q$-normalized atom adapted to $\Omega$ if 
	$$\supp a \subset \Omega, \qquad \|a\|_{L^q(\R^2)} \leq |\Omega|^{1/q - 1},$$
	and if $a$ has a further decomposition
	$$a = \sum_{R\in M_1(\Omega)} a_R + \sum_{R\in M_2(\Omega)} a_R$$
	into rectangular pieces, $\supp a_R \subset 3R$, satisfying
	\begin{equation} \label{eq:qnormalization}
		\Bigg(
		\sum_{R\in M_1(\Omega)}\gamma_2(R)^{-\delta}\|a_R\|_{L^q}^q
		+
		\sum_{R\in M_2(\Omega)}\gamma_1(R)^{-\delta}\|a_R\|_{L^q}^q
		\Bigg)^{1/q}
		\lesssim_\delta
		|\Omega|^{1/q-1},
	\end{equation}
	for every $\delta > 0$ (the implied constant depends on $\delta$ and $q$ only), and the cancellation conditions
	\begin{equation} \label{eq:cancellation}
		\int_\R a_R(x_1,x_2)\,dx_1=0\quad \textrm{ a.e. }x_2,
		\qquad
		\int_\R a_R(x_1,x_2)\,dx_2=0\quad \text{ a.e. }x_1,
	\end{equation}
	hold for every $R \in M_1(\Omega) \cup M_2(\Omega)$.
	\begin{theo}[\cite{HanLuZhao10, HLPW21}] \label{thm:atomic}
		Let $1 < q < 2$. If $f \in H^1(\mathbb{C}_+^2) \cap L^q(\R^2)$ there exists a sequence of $L^q$-normalized atoms $\{a_i\}$ and scalars $\{\lambda_i\}$ such that 
		$$f = P_{++}\left (\sum_i \lambda_i a_i \right), \qquad c_1 \sum_i |\lambda_i| \leq \|f\|_{H^1} \leq c_2 \sum_i |\lambda_i|,$$
		where the constants $c_1$ and $c_2$ depend only on $q$.
	\end{theo}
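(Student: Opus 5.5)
The statement is a decomposition theorem quoted from \cite{HanLuZhao10, HLPW21}; here is how I would reprove it by adapting the Chang--Fefferman stopping-time construction. The plan is to build the atoms from a discrete wavelet expansion of $f$ and then verify the $L^q$ normalization through $L^2$ estimates, Hölder's inequality, and Journé's lemma.

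\textbf{Set-up.} Fix a compactly supported orthonormal wavelet $\phi$ on $\R$ and form the tensor wavelets $\phi_R = \phi_I \otimes \phi_J$ for $R = I\times J \in \mathcal{D}_2$. Each $\phi_R$ has mean zero in each variable separately and $\supp \phi_R \subset 3R$ after a harmless rescaling. Since $f \in L^q \cap H^1(\mathbb{C}_+^2)$, we may expand $f = \sum_R c_R \phi_R$ with $c_R = \langle f, \phi_R\rangle$. Because $f$ is analytic, $f = P_{++}(\sum_R c_R\phi_R)$, which is where the projection $P_{++}$ in the statement enters. Let
$$S f = \Big(\sum_R |c_R|^2 |R|^{-1}\chi_R\Big)^{1/2}$$
be the dyadic square function. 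Then $\|Sf\|_{L^1} \approx \|f\|_{H^1}$ by the product Hardy space theory.

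\textbf{Stopping time.} Set $\Omega_k = \{Sf > 2^k\}$ and $\Omega^\ast_k = \widetilde{\Omega_k}$. Let $B_k$ be the set of rectangles $R$ with $|R\cap \Omega_k| > |R|/2$ and $|R \cap \Omega_{k+1}| \le |R|/2$. Every $R$ with $c_R \ne 0$ lies in exactly one $B_k$, and $R \subset \Omega^\ast_k$. Define
$$\lambda_k = C\,2^k|\Omega^\ast_k|, \qquad a_k = \lambda_k^{-1}\sum_{R\in B_k} c_R \phi_R.$$
Then $\supp a_k \subset \Omega'_k$, a slightly enlarged version of $\Omega^\ast_k$ obtained by applying the extension twice. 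Since $|\Omega'_k| \lesssim |\Omega_k|$, we get
$$\sum_k|\lambda_k| \lesssim \sum_k 2^k|\Omega_k| \lesssim \|Sf\|_1 \lesssim \|f\|_{H^1},$$
which is the bound $c_1\sum_k|\lambda_k| \le \|f\|_{H^1}$. The standard Chang--Fefferman computation gives
$$\sum_{R\in B_k}|c_R|^2 \lesssim \int_{\Omega^\ast_k\setminus\Omega_{k+1}} (Sf)^2 \lesssim 2^{2k}|\Omega_k|.$$
Hence, by orthonormality, $\|a_k\|_2 \lesssim |\Omega'_k|^{-1/2}$. Hölder on the support then gives $\|a_k\|_q \lesssim |\Omega'_k|^{1/q-1}$.

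\textbf{Rectangular pieces.} For the decomposition into rectangular pieces, assign each $R\in B_k$ to one rectangle $R'$ in $M_1(\Omega'_k)\cup M_2(\Omega'_k)$ containing it. First enlarge $R$ maximally in the $x_1$-direction inside $\Omega'_k$ to get some $R_1\in M_1$, or in the $x_2$-direction to get some $R_2 \in M_2$. Choose the direction according to which embeddedness constant is smaller, so that $R$ is comparably embedded. Set $a_{R'} = \lambda_k^{-1}\sum_{R\mapsto R'} c_R\phi_R$. The cancellation conditions \eqref{eq:cancellation} are inherited from $\phi_R$. The support condition $\supp a_{R'} \subset 3R'$ holds because $R\subset R'$ forces $3R \subset 3R'$.

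\textbf{Weighted normalization \eqref{eq:qnormalization}.} Write $\gamma(R')$ for the relevant embeddedness constant. Using $\|a_{R'}\|_q \le |3R'|^{1/q-1/2}\|a_{R'}\|_2$ and Hölder with exponents $\frac{2}{2-q}$ and $\frac 2q$, we get
$$\sum_{R'}\gamma(R')^{-\delta}\|a_{R'}\|_q^q \lesssim \Big(\sum_{R'}\gamma(R')^{-\frac{2\delta}{2-q}}|R'|\Big)^{1-q/2}\Big(\sum_{R'}\|a_{R'}\|_2^2\Big)^{q/2}.$$
The Journé lemma, applied to $\Omega'_k$ with exponent $\frac{2\delta}{2-q}$, bounds the first factor by $|\Omega'_k|^{1-q/2}$. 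Orthonormality together with the disjointness of the assignment bounds the second factor by $\|a_k\|_2^q \lesssim |\Omega'_k|^{-q/2}$. Combining the two gives $|\Omega'_k|^{1-q}$, which is \eqref{eq:qnormalization} with $\Omega = \Omega'_k$.

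\textbf{Converse and main obstacle.} For the converse inequality $\|f\|_{H^1}\le c_2\sum_k|\lambda_k|$, show that $\|P_{++}a\|_{H^1}\lesssim 1$ uniformly over $L^q$-atoms $a$. Split $\int S(a)$ over $\widetilde{\Omega}$ and its complement. On $\widetilde{\Omega}$, use Hölder and $L^q$-boundedness of the square function. Off $\widetilde\Omega$, use the cancellation of each $a_R$ and Hölder in $q$ to estimate $\int_{(\gamma R)^c} S(a_R) \lesssim \gamma^{-\epsilon}|R|^{1-1/q}\|a_R\|_q$. Then sum using \eqref{eq:qnormalization} and Journé again. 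I expect this off-support estimate to be the main obstacle: one has to trade the decay $\gamma^{-\epsilon}$ against the weights $\gamma^{-\delta}$ while summing with $\ell^q$ rather than $\ell^2$ control. I would first try a Hölder split in which small $\delta$ is absorbed by the Journé packing.
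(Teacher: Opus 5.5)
The paper gives no proof of this theorem. It imports it from \cite{HanLuZhao10, HLPW21}, adding only $P_{++}$, and Section 3 uses only the decomposition half, $c_1\sum_i|\lambda_i|\le\|f\|_{H^1}$. Your construction of that half is a genuinely different, self-contained route, and it is sound. The Chang--Fefferman stopping time gives $\ell^2$ control $\sum_{R'}\|a_{R'}\|_2^2=\|a_k\|_2^2\lesssim|\Omega_k'|^{-1}$. H\"older with exponents $\frac{2}{2-q},\frac{2}{q}$, followed by Journ\'e's lemma with exponent $\frac{2\delta}{2-q}$, then yields \eqref{eq:qnormalization}. Journ\'e bounds the full sums over $M_1(\Omega_k')$ and $M_2(\Omega_k')$, so your choice of direction is irrelevant. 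What this buys you: because the weights are \emph{negative} powers of $\gamma$, the $L^q$-atoms needed in Section 3 come essentially for free from $L^2$ atoms, without the Han--Lu--Zhao machinery. Two small repairs are needed.
First, a compactly supported wavelet cannot be ``rescaled''. Either use one with support length $3$, re-indexed to its middle dyadic interval, and check it is regular enough for $\|Sf\|_{L^1}\approx\|f\|_{H^1}$; or allow $\operatorname{supp} a_R\subset CR$.
Second, $R\subset\Omega_k^\ast$ only gives $\mathscr M_s\chi_{\Omega_k^\ast}\ge 1/9$ on $3R$. The second enlargement therefore needs a threshold such as $1/10$, not $1/2$.

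The converse half, as sketched, has a genuine gap. The obstacle is geometric, not the H\"older trade-off you anticipate; trading $\gamma^{-\epsilon}$ against $\gamma^{-\delta}$ is harmless once $\delta<\epsilon q$. For $R=I\times J\in M_2(\Omega)$, the enlarged set is only guaranteed to contain $c\hat I\times cJ$. Its complement is therefore \emph{not} contained in the complement of the two-sided dilate $\gamma_1(R)R$.
Concretely, take $\Omega=Q=I_0\times J_0$ and $R=I\times J_0$ with $|I|\ll|I_0|$. Points with $x_1\in I$ and $|x_2-c_{J_0}|\gg|J_0|$ lie outside every fixed enlargement of $Q$, yet inside $\gamma_1(R)R$. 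On this strip only the $x_2$-cancellation helps, so $\int S(a_R)$ there is of size $|R|^{1-1/q}\|a_R\|_q$ with no decay in $\gamma_1(R)$. Summing these bounds needs $\sum_R|R|^{1-1/q}\|a_R\|_q$, which H\"older can only control through $\sum_{R\in M_2(Q)}\gamma_1(R)^{\eta}|R|$ with $\eta\ge 0$. That sum is infinite already for a square.
For instance, place a Haar-type piece of $L^q$-mass $|Q|^{1-q}2^{-n}/L$ on each of the $2^n$ rectangles of level $n\le L$. This satisfies all the atom conditions, including $\|a\|_q\lesssim|Q|^{1/q-1}$ by Littlewood--Paley since $q<2$. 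Yet your individual bounds then sum to order $L^{1-1/q}$. So the sketch does not give $\|P_{++}a\|_{H^1}\lesssim 1$ uniformly over atoms.
For the statement as written you can bypass this entirely. Keep every $k\in\mathbb{Z}$, with $a_k=0$ when $B_k=\emptyset$. The layer-cake formula gives $\|Sf\|_{L^1}\le\sum_k 2^k|\Omega_k|\le C^{-1}\sum_k\lambda_k$, hence $\|f\|_{H^1}\le c_2\sum_k|\lambda_k|$ for the decomposition you built.
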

	We have added the projection $P_{++}$ in this statement, in order to work directly with the analytic Hardy space $H^1(\mathbb{C}_+^2)$. The original statement is given for the real product Hardy space (which projects to $H^1(\mathbb{C}_+^2)$ under $P_{++}$). 
	\subsection{A basic annular decay estimate} \label{subsec:anndecay}
	Given $I \in \mathcal{D}$, we split $\R$ into annuli in the following way:
	$$E_0(I) = 8 I, \qquad E_u(I) =(2^u I)\setminus (2^{u-1}I), \; u \geq 4.$$
	For convenience of notation, we exclude $u = 1,2,3$ from the definition, but we do not always explicitly spell this out.
	Similarly, for a rectangle $R = I \times J$, we let 
	$$E_{u, v}(R) = E_u(I) \times E_v(J),$$ 
	and suppress $1 \leq u,v \leq 3$ from the notation.
	\begin{lm} \label{lem:annulardecay}
		Let $1 \leq q < \infty$, and let $a_R \in L^q(\R^2)$ be supported in a rectangle $3R$, $R = I \times J$, and satisfy the cancellation conditions \eqref{eq:cancellation}.
		Then for every \(M\ge 1\), 
		\begin{equation*}
			\|\widetilde\Delta_{i,j}^\ast a_R\|_{L^q(E_{u,v}(R))}
			\lesssim
			m_i(I)m_j(J)\,\theta_u(i,I)\,\theta_v(j,J)\,\|a_R\|_{L^q},
		\end{equation*}
		with an implied constant depending only on $M$ and $q$, where 
		$$m_i(I):=\min(1,2^i|I|)$$
		and
		\[
		\theta_0(i,I) = 1,\qquad
		\theta_u(i,I) = (1+2^u2^i|I|)^{-M}, \quad u\ge 4.
		\]
	\end{lm}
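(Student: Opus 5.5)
The operator $\widetilde\Delta_{i,j}^\ast$ is convolution with the tensor-product kernel $K_i(x_1)\overline{\widetilde K}_j(x_2)$, where $K_i(t)=2^i k(2^i t)$ and $k=\Finve\overline{\widetilde\psi}$ is a Schwartz function with all moments vanishing, since $\widetilde\psi$ vanishes near the origin. The estimate factorizes across the two variables, so the plan is to prove the corresponding one-parameter estimate and then apply it in each variable separately. Concretely, for fixed $x_2$ we treat $a_R(\cdot,x_2)$ as a function of $x_1$ supported in $3I$ with mean zero. We convolve in $x_1$, restrict to $E_u(I)$, and take $L^q(dx_1)$ norms. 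This produces a bound by $m_i(I)\theta_u(i,I)\|a_R(\cdot,x_2)\|_{L^q(dx_1)}$. The resulting function of $(x_1,x_2)$ still has mean zero in $x_2$ for a.e. $x_1$, because the $x_1$-convolution commutes with integration in $x_2$, and it is still supported in $x_2\in 3J$. So the same one-variable bound applies in $x_2$, and Fubini/Minkowski combines the two steps. For this to work, the one-variable lemma must be stated for functions in $L^q$ of the line, possibly vector-valued; the constants are uniform, so this is harmless.

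For the one-variable estimate, let $g$ be supported in $3I$ with $\int g=0$, and let $c_I$ be the center of $I$. There are two regimes.

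\emph{Case $2^i|I|\le 1$.} We use the cancellation: $K_i*g(x)=\int (K_i(x-y)-K_i(x-c_I))g(y)\,dy$. By the mean value theorem, $|K_i(x-y)-K_i(x-c_I)|\lesssim |I|\,2^{2i}(1+2^i|x-c_I|)^{-N}$. Hölder then gives $\|g\|_{L^1}\le |3I|^{1-1/q}\|g\|_{L^q}$. Taking the $L^q$ norm over $E_u(I)$ produces
$$2^i|I|\cdot 2^{i}|I|^{1-1/q}\cdot\big\|(1+2^i|x-c_I|)^{-N}\big\|_{L^q(E_u)}.$$
Since $1+2^i|x-c_I|\gtrsim 1+2^{u+i}|I|$ on $E_u$ with $u\ge4$, and $\|(1+2^i|x|)^{-N}\|_{L^q}\lesssim 2^{-i/q}$, the total is bounded by $2^i|I|\,(2^i|I|)^{1-1/q}\,(1+2^{u+i}|I|)^{-N/2}$. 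Here $(2^i|I|)^{1-1/q}\le1$, and choosing $N\ge 2M$ gives $m_i\theta_u$. For $u=0$ this is even simpler.

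\emph{Case $2^i|I|>1$.} Here $m_i=1$, and no cancellation is needed. For $u=0$, Young's inequality gives $\|K_i*g\|_q\lesssim\|g\|_q$. For $u\ge4$, $x\in E_u$ and $y\in3I$ satisfy $|x-y|\gtrsim 2^u|I|$. Hence $|K_i(x-y)|\lesssim 2^i(1+2^i|x-y|)^{-N}$, and we split $(1+2^i|x-y|)^{-N}\le(1+2^{i+u}|I|)^{-M}(1+2^i|x-y|)^{-(N-M)}$ using the lower bound on $|x-y|$. Young's inequality with the remaining integrable kernel of $L^1$ norm $O(1)$ finishes the case.

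The main obstacle is the small-scale cancellation case, where one must extract both $2^i|I|$ and the decay in $u$ simultaneously while keeping the correct $L^q$ scaling. Doing the derivative estimate carefully with a kernel bound centered at $c_I$, which is valid because $|y-c_I|\lesssim|I|\ll |x-c_I|$ on $E_u$ for $u\ge4$, resolves this. For $u=0$ one can instead bound $|K_i(x-y)-K_i(x-c_I)|\lesssim 2^{2i}|I|$ directly, so the extra factor $(2^i|I|)^{1-1/q}\le1$ is again harmless.
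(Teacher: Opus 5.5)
Your proof is correct and follows the paper's argument. You reduce to a one-variable estimate applied successively in each variable via Fubini. Then the mean-zero condition together with the mean value theorem yields the factor $m_i(I)$, and the Schwartz decay of the kernel on the separated annuli yields $\theta_u(i,I)$.

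The only difference is bookkeeping. After rescaling to $I=(-1,1)$, the paper applies the cancellation bound for every $i$ when $u\ge 4$ and absorbs the resulting factor $2^{2i}2^{u/q}$ into powers of $(1+2^{u+i})$. You instead split at $2^i|I|=1$ and use Young's inequality without cancellation at high frequencies, which also spells out the $u=0$ case that the paper only calls similar.
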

	\begin{proof}	
		By iteration, it suffices to show the one dimensional estimate $$	\|\widetilde\Delta_{i}^\ast f\|_{L^q(E_{u}(I))}
		\lesssim
		m_i(I)\theta_u(i,I)\|f\|_{L^q},$$ for functions
		$$f\in L^q(\mathbb{R}), \qquad \supp f\subset 3I, \qquad \int_{\mathbb{R}}f(x) \, dx=0.$$ Note that if $c_I$ is the center of $I$ and we set $g(x)=f(\frac{|I|}{2}x+c_I)$, then $g\in L^q(\mathbb{R})$, $\supp g\subset (-3,3)$, $\int_{\mathbb{R}}g(x)dx=0$ and, if $|I|=2^{i'}$,
		$$\widetilde\Delta_{i}^\ast f(x)=\widetilde\Delta_{i+i'-1}^\ast g\left(2\frac{x-c_I}{|I|}\right).$$
		By a simple computation it therefore suffices to consider $I=(-1,1)$.   We will only explicitly consider the case $u\geq4$; the treatment of $u=0$ is similar. 
		
		Let $K$ be the convolution kernel of $\widetilde\Delta_0^\ast$ and let $K_i(t)=2^iK(2^it)$, $i \in \mathbb{Z}$. For $x\in E_u(-1,1)$ and $y\in (-3,3)$,
		$$|\widetilde\Delta_{i}^\ast f(x)|=\left|\int_{\mathbb{R}}K_i(x-y)f(y)dy \right|\leq \int_{\mathbb{R}}|K_i(x-y)-K_i(x)||f(y)|dy.$$
		Now, using the mean value theorem, we have that
		$$|K_i(x-y)-K_i(x)|\leq |y|\sup_{|\xi-x|<|y|}|K_i'(\xi)|\lesssim 2^{2i}\sup_{|\xi-x|<3}|K'(2^i\xi)|\lesssim 2^{2i}\sup_{\xi\in 2^i(x-3,x+3)}|K'(\xi)|.$$ Since now $K'$ is a Schwartz function, for every $M\geq 2$ it holds that 
		$|K'(t)|\lesssim (1+|t|)^{-M}$, and thus
		$$|K_i(x-y)-K_i(x)| \lesssim  2^{2i}\sup_{\xi\in 2^i(x-3,x+3)}(1+|\xi|)^{-M}\approx 2^{2i}(1+2^i|x|)^{-M},$$ where the last estimate holds since $x\in E_u(-1,1)$ and $u\geq 4$.
		Thus, we conclude that
		\begin{align*} 
			\|\widetilde{\Delta}^\ast_i f\|_{L^q(E_u(-1,1))}&\leq\left(\int_{E_u(-1,1)}\left(\int_{(-3,3)}|K_i(x-y)-K_i(x)||f(y)|dy\right)^qdx\right)^{\frac{1}{q}}
			\nonumber \\
			&\lesssim2^{2i}\left(\int_{E_u(-1,1)}(1+2^i|x|)^{-Mq}\left(\int_{(-3,3)}|f(y)|dy\right)^qdx\right)^{\frac{1}{q}}
			\nonumber \\
			&\lesssim2^{2i}\|f\|_{L^1}\left(\int_{E_u(-1,1)}(1+2^i|x|)^{-Mq}dx\right)^{\frac{1}{q}}
			\nonumber \\
			&\lesssim2^{2i}2^{\frac{u}{q}}(1+2^{u+i})^{-M}\|f\|_{L^q}.
			\nonumber
		\end{align*}
		Now we can observe that $2^{i}2^{\frac{u}{q}}\leq 1+2^{i+u}$ and $2^{2i}2^{\frac{u}{q}}\leq (1+2^{i+u})^2$ and thus
		$$\|\widetilde{\Delta}^\ast_i f\|_{L^q(E_u(-1,1))}\lesssim 2^i(1+2^{u+i})^{-M+1}\|f\|_{L^q}$$
		and
		$$\|\widetilde{\Delta}^\ast_i f\|_{L^q(E_u(-1,1))}\lesssim (1+2^{u+i})^{-M+2}\|f\|_{L^q}.$$
		Since $M$ is arbitrary, this gives us that
		$$\|\widetilde{\Delta}^\ast_i f\|_{L^q(E_u(-1,1))}\lesssim \min(1,2^i)(1+2^{u+i})^{-M'}\|f\|_{L^q},$$ as desired. 
	\end{proof}

	\section{Proof of theorem}
	By inclusion of Schatten classes, it is sufficient to consider $p > 2$. By Theorem~\ref{thm:besov}, we need to show that $B_{p,p}^{1/p}(\mathbb{C}_+^2) \subset \BMO_{\produ}$. By the duality $(H^1(\mathbb{C}_+^2))^\ast \simeq P_{++}\BMO_{\textrm{prod}}$, the Hahn--Banach theorem then implies the existence of a bounded symbol. The very same duality, together with the atomic decomposition given by Theorem~\ref{thm:atomic}, in turn says that it is equivalent to show that 
	\[
	|\langle f, P_{++} a \rangle| = |\langle f, a \rangle|
	\lesssim
	\|f\|_{B^{1/p}_{p,p}}
	\]
	holds uniformly for every $L^q$-normalized atom $a$ and $f \in B^{1/p}_{p,p}(\mathbb{C}_+^2)$, where $q < 2$ is the dual index of $p$.  We therefore fix such an atom $a$, supported in an open set $\Omega$ of finite measure, with decomposition
	\[
	a=\sum_{R\in M_1(\Omega)} a_R + \sum_{R\in M_2(\Omega)} a_R
	\]
	satisfying \eqref{eq:qnormalization}; we fix a $\delta > 0$ so that 
	$$\beta:=\delta(p-1)<1.$$ 
	By appealing to a symmetric argument afterwards, we may of course assume that the first sum is empty, so that 
	$$a=  \sum_{R\in M_2(\Omega)} a_R.$$
	
	Recall our notation for product annuli from Section~\ref{subsec:anndecay}, $E_{u, v}(R) = E_u(I) \times E_v(J)$, and note that for every $R$, the sets $\{E_{u,v}(R)\}_{u,v \geq 0}$ tile $\R^2$ up to sets of measure zero. We begin with the following lemma.
	\begin{lm}
		We have that
		\begin{equation*}
			|\langle f, a_R \rangle|
			\lesssim
			|R|^{1/p}\,\|a_R\|_{L^q}
			\sum_{u,v\ge 0}2^{-(u+v)/q}
			\Bigg(\int_{E_{u,v}(R)} S_p(f)(x)^p\,dx\Bigg)^{1/p},
		\end{equation*}
		with an implied constant depending only on $q$.
	\end{lm}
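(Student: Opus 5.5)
Write $R = I\times J$. The plan is to expand $f$ with the reproducing formula \eqref{eq:Caldron}, move the $\widetilde\Delta_{i,j}$ onto $a_R$, localize each term to the annuli $E_{u,v}(R)$, and then apply Lemma~\ref{lem:annulardecay} together with Hölder's inequality in the discrete variables $(i,j)$.

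First, by \eqref{eq:Caldron},
\[
\langle f, a_R\rangle=\sum_{i,j}\langle \Delta_{i,j}f, \widetilde\Delta_{i,j}^\ast a_R\rangle
=\sum_{u,v\ge 0}\sum_{i,j}\int_{E_{u,v}(R)}\Delta_{i,j}f\,\overline{\widetilde\Delta_{i,j}^\ast a_R}\,dx .
\]
(Convergence is handled by a routine truncation argument.) Fix $(u,v)$. Insert the weights $2^{(i+j)/p}\cdot 2^{-(i+j)/p}$ and apply Hölder in $x$ and then in $(i,j)$ with exponents $p,q$. This bounds the $(u,v)$-term by
\[
\Bigg(\sum_{i,j}2^{i+j}\|\Delta_{i,j}f\|_{L^p(E_{u,v}(R))}^p\Bigg)^{1/p}\Bigg(\sum_{i,j}2^{-(i+j)q/p}\|\widetilde\Delta_{i,j}^\ast a_R\|_{L^q(E_{u,v}(R))}^q\Bigg)^{1/q}.
\]
The first factor is exactly $\big(\int_{E_{u,v}(R)}S_p(f)^p\big)^{1/p}$.

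For the second factor, Lemma~\ref{lem:annulardecay} gives the bound
\[
\|a_R\|_{L^q}\prod\Big(\sum_{i}2^{-iq/p}m_i(I)^q\theta_u(i,I)^q\Big)^{1/q},
\]
where the product runs over the two factors $(i,I,u)$ and $(j,J,v)$. It remains to show the one-dimensional estimate
\[
\sum_i 2^{-iq/p}m_i(I)^q\theta_u(i,I)^q\lesssim |I|^{q/p}2^{-uq/q}=|I|^{q/p}2^{-u}.
\]
Taking $q$-th roots then gives $|I|^{1/p}2^{-u/q}$, and multiplying the two factors produces $|R|^{1/p}2^{-(u+v)/q}$, as claimed.

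To check the one-dimensional estimate, normalize $|I|=2^{-k}$ and set $t=2^i|I|$. The sum becomes
\[
|I|^{q/p}\sum_{t\in 2^{\mathbb Z}} t^{-q/p}\min(1,t)^q(1+2^u t)^{-Mq}.
\]
\begin{itemize}[label={}, leftmargin=0pt]
\item \emph{Case $u=0$.} There is no decay factor. For $t\le1$ the summand is $t^{q-q/p}=t^{q(1-1/p)}$, a convergent geometric sum since $p>1$. For $t\ge 1$ the summand is $t^{-q/p}$, again convergent.
\item \emph{Case $u\ge4$.} For $t\ge 2^{-u}$ the factor $(2^ut)^{-Mq}$ makes the sum $\lesssim 2^{-u(q-q/p)}$. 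For $t<2^{-u}$ the sum of $t^{q(1-1/p)}$ is also $\lesssim 2^{-uq(1-1/p)}$. Since $q(1-1/p)=q/q'=q-1$, this gives $2^{-u(q-1)}$ rather than $2^{-u}$.
\end{itemize}

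The main obstacle is this bookkeeping in the case $u\ge4$, where the crude estimate above yields decay $2^{-u(q-1)}$ and not the needed $2^{-u}$. I expect to fix this using the extra freedom in Lemma~\ref{lem:annulardecay}. Its proof actually gives the sharper bound $2^{2i}2^{u/q}(1+2^{u+i})^{-M}$ in normalized units, i.e. the factor $m_i$ can be taken as $\min(1,t)^2$ times $2^{u/q}$. I would redo the small-$t$ range with $t^{2}$ in place of $t$. Then the factor $2^{u/q}$ is compensated by $t^{2q-q/p}$ summed up to $2^{-u}$, and I check that the net exponent of $2^{-u}$ is at least $1$, i.e. $2q-q/p-1\ge1$. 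This holds since $q>1$. If needed, I would absorb any remaining polynomial loss by choosing $M$ large.
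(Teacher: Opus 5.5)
Your argument is the paper's own (reproducing formula, splitting over the annuli $E_{u,v}(R)$, H\"older in $x$ and then in $(i,j)$, Lemma~\ref{lem:annulardecay}, and a range-by-range estimate of $\sum_i 2^{-iq/p}m_i(I)^q\theta_u(i,I)^q$), and it is correct. The one flaw is that your ``main obstacle'' is an arithmetic slip: since $1/p+1/q=1$ we have $q(1-1/p)=1$, whereas it is $q/p$ that equals $q-1$. So your crude bounds $2^{-u(q-q/p)}$ and $2^{-uq(1-1/p)}$ are already exactly $2^{-u}$, as in the paper, and the detour through the sharper bound $2^{2i}2^{u/q}(1+2^{u+i})^{-M}$ is unnecessary, though harmless since there $2q-q/p-1=q\ge 1$.
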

	
	\begin{proof}
		We use the reproducing formula \eqref{eq:Caldron},
		\[
		\langle f, a_R \rangle
		=
		\sum_{i,j}\langle \Delta_{i,j}f, \widetilde\Delta_{i,j}^\ast a_R \rangle,
		\]
		split the integral according to the sets $E_{u,v}(R)$ and apply Hölder, which gives
		\[
		|\langle \Delta_{i,j}f, \widetilde\Delta_{i,j}^\ast a_R \rangle|
		\leq
		\sum_{u,v\ge 0}
		\|\Delta_{i,j}f\|_{L^p(E_{u,v}(R))}
		\|\widetilde\Delta_{i,j}^\ast a_R\|_{L^q(E_{u,v}(R))}.
		\]
		and, by Lemma~\ref{lem:annulardecay},
		\[
		|\langle \Delta_{i,j}f, \widetilde\Delta_{i,j}^\ast a_R \rangle|
		\lesssim
		\|a_R\|_{L^q}
		\sum_{u,v\ge 0}
		m_i(I)m_j(J)\,\theta_u(i,I)\,\theta_v(j,J)\,
		\|\Delta_{i,j}f\|_{L^p(E_{u,v}(R))}.
		\]
		By discrete H\"older, we thus have
		\[
		|\langle f, a_R \rangle|
		\lesssim
		\|a_R\|_{L^q}
		\sum_{u,v\ge 0}
		A_u(I)\,A_v(J)\,
		\Bigg(\sum_{i,j}2^i2^j\|\Delta_{i,j}f\|_{L^p(E_{u,v}(R))}^p\Bigg)^{1/p},
		\]
		where
		\[
		A_u(I) =
		\Bigg(\sum_i [2^{-i/p}m_i(I)\theta_u(i,I)]^q\Bigg)^{1/q}.
		\]
		To finish the proof, we thus need to show that
		$$A_u(I)\lesssim |I|^{1/p}2^{-u/q}.$$
		We will verify this explicitly for $u\geq 4$; the case $u=0$ is similar. First let us observe that
		$$A_u(I)=|I|^{1/p}A_u(-1/2,1/2),$$ and thus it suffices to show that 
		$$A_u(-1/2,1/2)^q =
		\sum_i 2^{-iq/p}\min(1,2^i)^q(1+2^{i+u})^{-Mq}\lesssim 2^{-u}.$$ 
		First, for $i\leq -u$ we have that
		$$\sum_{i \leq -u}2^{-iq/p}\min(1,2^i)^q(1+2^{i+u})^{-Mq} \leq \sum_{i\leq -u} 2^{-iq/p}2^{iq}=\sum_{i\leq -u} 2^{i}\approx 2^{-u}.$$
		Now for the case $-u<i\leq 0$ we can use the fact that $(1+2^{i+u})^{-Mq}\lesssim 2^{-Mq(i+u)}$ to get
		\begin{eqnarray} 
			\sum_{-u<i\leq 0}2^{-iq/p}\min(1,2^i)^q(1+2^{i+u})^{-Mq}&\lesssim& \sum_{-u<i\leq 0}2^{-iq/p}2^{iq}2^{-Mq(i+u)} \nonumber \\
			&=&2^{-Mqu}\sum_{-u<i\leq 0}2^{i(1-qM)}\approx 2^{-u}. \nonumber 
		\end{eqnarray} 
		Finally, for $i>0$, using the same estimate,
		\begin{eqnarray}
			\sum_{i>0}2^{-iq/p}\min(1,2^i)^q(1+2^{i+u})^{-Mq}&\lesssim& \sum_{i>0}2^{-iq/p}2^{-Mq(i+u)}   \nonumber \\
			&\lesssim& 2^{-Mqu}\sum_{i>0}2^{-iq(1/p+M)}\approx 2^{-Mqu}. \nonumber 
		\end{eqnarray}
		Combining all the estimates we get that 
		$$A_u(-1/2,1/2)^q\lesssim 2^{-u}+2^{-Mqu}+2^{-u}\approx 2^{-u},$$
		as desired.			
	\end{proof}
	
	Summing over $R \in M_2(\Omega)$ and applying H\"older, we therefore find that
	\begin{multline*}
		|\langle f, a \rangle| \lesssim \sum_{u, v \ge 0} 2^{-(u+v)/q} \Bigg(\sum_{R\in M_2(\Omega)} \gamma_1(R)^{-\delta} \|a_R\|_q^q\Bigg)^{1/q}
		\Bigg(\sum_{R\in M_2(\Omega)} \gamma_1(R)^{\delta \frac{p}{q}}\,|R|
		\int_{E_{u,v}(R)} S_p(f)^p\Bigg)^{1/p} \\ \lesssim
		|\Omega|^{1/q - 1}\sum_{u, v \ge 0} 2^{-(u+v)/q} 
		\Bigg(\sum_{R\in M_2(\Omega)} \gamma_1(R)^{\beta}\,|R|
		\int_{E_{u,v}(R)} S_p(f)^p\Bigg)^{1/p} \\
		= |\Omega|^{1/q - 1}\sum_{u, v \ge 0} 2^{-(u+v)/q} 
		\Bigg(\int_{\R^2} S_p(f)^p(x)  \sum_{\substack{R\in M_2(\Omega) \\ x \in E_{u,v}(R)}} \gamma_1(R)^{\beta}\,|R| \, dx
		\Bigg)^{1/p},
	\end{multline*}
	where we used the $L^q$-normalization \eqref{eq:qnormalization} in the second step, and Fubini in the third.
	
	Thus the proof is finished if we can establish that
	\begin{equation} \label{eq:maingeometric}
		\sum_{\substack{R = I \times J \in M_2(\Omega)\\ x\in 2^u I \times 2^v J}}
		\gamma_1(R)^\beta |R|
		\leq C_\beta
		2^{u+v}|\Omega|,
	\end{equation}
	with a constant that only depends on $\beta < 1$, since this allows us to conclude that
	$$|\langle f, a \rangle| \lesssim \sum_{u,v \geq 0} 2^{(1/p - 1/q)(u+v)} \|S_p(f)\|_{L^p} \lesssim \|f\|_{B_{p,p}^{1/p}},$$
	with a final implied constant depending only on $p$. To carry this out we require the following elementary dyadic counting lemma.
	\begin{lm} \label{lem:dyadcount}
		Let $g \in L^1(\R)$ be non-negative, $g \geq 0$. Then, for $\lambda \geq 1$ and $x \in \R$,
		\begin{equation*}
			\sum_{\substack{J \in \mathcal{D}\\ x\in \lambda J}}
			|J|\,\inf_{J} g 
			\leq
			6\lambda \|g\|_{L^1(\R)}.
		\end{equation*}
	\end{lm}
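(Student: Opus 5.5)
The plan is to organize the sum by dyadic scale. Write $|J| = 2^n$ for $n \in \mathbb{Z}$. For each fixed scale $n$, we will show that the contribution
$$\sum_{\substack{J \in \mathcal{D},\, |J| = 2^n\\ x \in \lambda J}} |J| \inf_J g$$
is controlled by the integral of $g$ over a suitable interval around $x$. We then sum over $n$ by a layer-cake style argument.

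\textbf{Step 1: count intervals at a fixed scale.} The condition $x \in \lambda J$ means that the center $c_J$ satisfies $|x - c_J| < \lambda 2^{n-1}$. The centers of dyadic intervals of length $2^n$ are spaced $2^n$ apart, so at most $\lambda + 1 \le 2\lambda$ such intervals occur. All of them are contained in the interval $T_n(x) = (x - (\lambda+1)2^{n-1},\, x + (\lambda+1)2^{n-1})$. Using $|J| \inf_J g \le \int_J g$ and the disjointness of intervals at a fixed scale, the contribution of scale $n$ is at most $\int_{T_n(x)} g$. This bound alone does not sum over $n$, so we keep the infimum at large scales instead.

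\textbf{Step 2: a cleaner route.} Rewrite $\inf_J g = \int_0^\infty \chi_{\{\inf_J g > t\}}\, dt$ and exchange sum and integral. Fix $t > 0$ and let $G_t = \{g > t\}$, which has $|G_t| \le \|g\|_{L^1}/t$. Any $J$ with $\inf_J g > t$ satisfies $J \subset G_t$ up to a null set, hence $|J| \le |G_t|$. For fixed $t$ we therefore need to bound $\sum |J|$ over dyadic $J \subset G_t$ with $x \in \lambda J$.

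At each scale $2^n \le |G_t|$, there are at most $\lambda + 1$ admissible intervals. Their total length is at most $\min\bigl((\lambda+1)2^n,\, |G_t|\bigr)$, where the second bound uses that they are disjoint subsets of $G_t$. Summing the geometric series $(\lambda+1)2^n$ over $2^n \le |G_t|$ gives at most $2(\lambda+1)|G_t| \le 4\lambda |G_t|$. Integrating in $t$ then yields $4\lambda \int_0^\infty |G_t|\, dt = 4\lambda \|g\|_{L^1}$, which is within the stated constant $6\lambda$.

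\textbf{Main obstacle.} The only delicate point is the pure counting of intervals at a fixed scale. Those with $x \in \lambda J$ number at most $\lceil \lambda \rceil + 1$, depending on the open/closed convention for $\lambda J$. Since $\lambda \ge 1$, this is $\le 3\lambda$, which gives a constant of $6\lambda$ after the geometric sum. This explains the stated $6$. Measure-zero issues in the step "$\inf_J g > t$ implies $J \subset G_t$" are handled by taking the infimum to mean the essential infimum, or by noting that $|J \setminus G_t| = 0$ suffices for the disjointness bound.
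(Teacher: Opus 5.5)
Your Step 2 is correct and is essentially the paper's argument. Both write the sum as a layer-cake integral in $t$, count $O(\lambda)$ admissible dyadic intervals per scale, and sum a geometric series capped by $|\{g>t\}|$. The only difference is how the scales are capped: the paper decomposes $\{g>t\}$ into maximal dyadic intervals $L$ and sums $6\lambda|L|$, while you use $|J|\le|\{g>t\}|$ directly. Your version avoids the maximal intervals and even gives the better constant $2(\lambda+1)\le 4\lambda$.
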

	\begin{proof}
		Fix $x$ and $\lambda$. For $t > 0$, let $E_t:=\{y \, : \, g(y) >t\}$, and let $\mathcal M_t$ be the collection of maximal dyadic intervals contained in $E_t$. For $L\in \mathcal M_t$, there are at most $3\lambda$ intervals $J \in \mathcal{D}$ such that $x \in \lambda J$ and $|J| = 2^{-m}|L|$, $m = 0, 1, \ldots$.  Thus
		\[
		\sum_{\substack{J\subset L\\ x\in \lambda J}} |J|
		\leq 6\lambda |L|,
		\]
		and summing over \(\mathcal M_t\), noting that sets in $L\in \mathcal M_t$ are pairwise disjoint,
		\[
		\sum_{\substack{J\subset E_t \\ x\in \lambda J}} |J|
		\leq 6\lambda \sum_{L\in \mathcal M_t}|L|
		\le 6 \lambda |E_t|.
		\]
		To finish, observe now that
		\[
		\sum_{x\in \lambda J}|J|\inf_J g = 	\int_0^\infty
		\sum_{\substack{ \inf_J g>t \\ x\in \lambda J}} |J|\,dt \leq \int_0^\infty
		\sum_{\substack{ J \subset E_t \\ x\in \lambda J}} |J|\,dt \leq 6 \lambda \int_0^\infty |E_t| \, dt = 6\lambda \|g\|_{L^1(\R)},
		\]
		where we used Fubini in the first (and in the final) step.
	\end{proof}
	
	Returning to the proof of \eqref{eq:maingeometric}, fix an interval $J \in \mathcal{D}$ for which there exist $I \in \mathcal{D}$ such that $I \times J \in M_2(\Omega)$, and fix a corresponding maximal $\hat I \in \mathcal{D}$ for which $\hat I \times J \subset \wtOmega$. Among those rectangles with embeddedness constant
	$$2^k \leq \gamma_1(R) < 2^{k+1},$$ there are at most $6  \cdot 2^u$ rectangles $R = I \times J \subset \hat I \times J $ for which $x_1 \in 2^u I$. Therefore 
	\[ \sum_{\substack{R = I \times J \subset \hat I \times J \\ x_1 \in 2^u I}} \gamma_1(R)^\beta |R|
	\lesssim
	2^u \sum_{k =0}^\infty 2^{\beta k} 2^{-k}|\hat I||J| \lesssim
	2^u |\hat I||J|.
	\]
	Summing over all $\hat{I}$ and $J$ therefore yields
	\begin{equation*}
		\sum_{\substack{R = I \times J \in M_2(\Omega)\\ x\in 2^u I \times 2^v J}}
		\gamma_1(R)^\beta |R|
		\lesssim 2^u \sum_{\substack{J \in \mathcal{D} \\ x_2 \in 2^v J}} |J| |F_J|,
	\end{equation*}
	where
	\[
	F_J = \bigcup\{\hat I:\ \hat I\times J\subset \wtOmega,\ \hat I \text{ maximal dyadic}\}.
	\]
	By definition, $F_J$ is contained in the slice $\{x_1 \, :\, (x_1,y) \in \wtOmega\}$ for every $y \in J$. Therefore 
	$$|F_J| \leq \inf_J g,$$
	where $g(y) = |\{x_1 \, :\, (x_1,y) \in \wtOmega\}|$. We can therefore apply Lemma~\ref{lem:dyadcount}, yielding that 
	\[2^u \sum_{\substack{J \in \mathcal{D} \\ x_2 \in 2^v J}} |J| |F_J| \lesssim 2^{u+v} \| g\|_{L^1} = 2^{u+v} |\wtOmega| \lesssim 2^{u+v} |\Omega|.\]
	This establishes \eqref{eq:maingeometric} and finishes the proof of the theorem.

	\bibliographystyle{plain}
	\bibliography{BibSchatten1}

\begin{thebibliography}{10}

\bibitem{AMPS23}
Nicola Arcozzi, Pavel Mozolyako, Karl-Mikael Perfekt, and Giulia Sarfatti.
\newblock Bi-parameter potential theory and {C}arleson measures for the
  {D}irichlet space on the bidisc.
\newblock {\em Discrete Anal.}, pages Paper No. 22, 57, 2023.

\bibitem{bampouras2026nehari}
Konstantinos Bampouras.
\newblock Nehari's theorem and {H}ardy's inequality for {P}aley-{W}iener
  spaces.
\newblock {\em J. Funct. Anal.}, 290(10):Paper No. 111408, 26, 2026.

\bibitem{BP25}
Konstantinos Bampouras and Karl-Mikael Perfekt.
\newblock Besov spaces and {S}chatten class {H}ankel operators for {H}ardy and
  {P}aley--{W}iener spaces in higher dimensions.
\newblock {\em arXiv:2409.04184}, 2025.

\bibitem{MR4227573}
Marcus Carlsson and Karl-Mikael Perfekt.
\newblock Nehari's theorem for convex domain {H}ankel and {T}oeplitz operators
  in several variables.
\newblock {\em Int. Math. Res. Not. IMRN}, (5):3331--3361, 2021.

\bibitem{ChaFef82}
Sun-Yung~A. Chang and Robert Fefferman.
\newblock The {C}alder\'on-{Z}ygmund decomposition on product domains.
\newblock {\em Amer. J. Math.}, 104(3):455--468, 1982.

\bibitem{Fef86}
Robert Fefferman.
\newblock Some recent developments in {F}ourier analysis and {$H^p$} theory on
  product domains. {II}.
\newblock In {\em Function spaces and applications ({L}und, 1986)}, volume 1302
  of {\em Lecture Notes in Math.}, pages 44--51. Springer, Berlin, 1988.

\bibitem{FefStein82}
Robert Fefferman and Elias~M. Stein.
\newblock Singular integrals on product spaces.
\newblock {\em Adv. in Math.}, 45(2):117--143, 1982.

\bibitem{HanLuZhao10}
Y.~Han, G.~Lu, and K.~Zhao.
\newblock Discrete {C}alder\'on's identity, atomic decomposition and
  boundedness criterion of operators on multiparameter {H}ardy spaces.
\newblock {\em J. Geom. Anal.}, 20(3):670--689, 2010.

\bibitem{HLPW21}
Yongsheng Han, Ji~Li, M.~Cristina Pereyra, and Lesley~A. Ward.
\newblock Atomic decomposition of product {H}ardy spaces via wavelet bases on
  spaces of homogeneous type.
\newblock {\em New York J. Math.}, 27:1173--1239, 2021.

\bibitem{MR2263964}
Henry Helson.
\newblock Hankel forms and sums of random variables.
\newblock {\em Studia Math.}, 176(1):85--92, 2006.

\bibitem{HTV21}
Irina Holmes, Sergei Treil, and Alexander Volberg.
\newblock Dyadic bi-parameter repeated commutator and dyadic product {BMO}.
\newblock {\em arXiv:2101.00763}, 2021.

\bibitem{Journe85}
Jean-Lin Journ\'e.
\newblock Calder\'on-{Z}ygmund operators on product spaces.
\newblock {\em Rev. Mat. Iberoamericana}, 1(3):55--91, 1985.

\bibitem{MPV22}
Pavel Mozolyako, Georgios Psaromiligkos, Alexander Volberg, and Pavel
  Zorin-Kranich.
\newblock Carleson embedding on the tri-tree and on the tri-disc.
\newblock {\em Rev. Mat. Iberoam.}, 38(7):2069--2116, 2022.

\bibitem{Pipher86}
Jill Pipher.
\newblock Journ\'e's covering lemma and its extension to higher dimensions.
\newblock {\em Duke Math. J.}, 53(3):683--690, 1986.

\bibitem{MR2097606}
Sandra Pott and Martin~P. Smith.
\newblock Paraproducts and {H}ankel operators of {S}chatten class via
  {$p$}-{J}ohn-{N}irenberg theorem.
\newblock {\em J. Funct. Anal.}, 217(1):38--78, 2004.

\bibitem{Rud69}
Walter Rudin.
\newblock {\em Function theory in polydiscs}.
\newblock W. A. Benjamin, Inc., New York-Amsterdam, 1969.

\end{thebibliography}
	
\end{document}